\documentclass{article}
\usepackage{amsmath}
\usepackage{amsthm}
\usepackage{amssymb}
\usepackage{delarray}
\usepackage[dvips]{graphics}
\usepackage{epsfig}
\usepackage{color}
\usepackage{mathrsfs}
\newtheorem{lemma}{Lemma}
\newtheorem{thm}{Theorem}
\newtheorem{defn}{Definition}
\newtheorem{conj}{Conjecture}

\newtheorem{cor}{Corollary}

\title{Discriminators of quadratic polynomials}
\author{Soohyun Park\\ Massachusetts Institute of Technology\\ Cambridge, MA 02139 \\
\texttt{soopark@mit.edu}}

\date{06/30/13}
\begin{document}

\maketitle

\begin{abstract}
Given $f \in \mathbb{Z}[x]$ and $n \in \mathbb{Z^{+}}$, the $\emph{discriminator}$ $D_f(n)$ is the smallest positive integer $m$ such that $f(1), \ldots, f(n)$ are distinct mod $m$. In a recent paper, Z.-W. Sun proved that $D_f(n) = d^{\lceil \log_d n \rceil}$ if $f(x) = x(dx - 1)$ for $d \in \{2, 3\}$. We extend this result to $d = 2^r$ for any $r \in \mathbb{Z}^{+}$ and find that $D_f(n) = 2^{\lceil \log_2 n \rceil}$ in this case. We also provide more general statements for $d = p^r$, where $p$ is a prime. In addition, we present a potential method for generating prime numbers with discriminators of polynomials which do not always take prime values. Finally, we describe some general statements and possible topics for study about the discriminator of an arbitrary polynomial with integer coefficients.
\end{abstract}

\section{Introduction}

\begin{defn} \cite{S, Z} 
Let $f(x) \in \mathbb{Z}[x]$ and $n \in \mathbb{Z^{+}}$. The \emph{discriminator} of $f$ is $D_f(n) := \min\{m \in \mathbb{Z^{+}}: f(1), f(2), \ldots, f(n) \text{ are distinct modulo $m$}\}$. \\ If no such $m$ exists, we set $D_f(n) = \infty$.
\end{defn}

The discriminator was first defined for $f(x) = x^2$ as the smallest positive integer $m$ such that $1^2, 2^2, \ldots, n^2$ are pairwise distinct modulo $m$. It was originally involved in determining an efficient algorithm for computing the square roots of a long sequence of integers for a problem in computer simulation (see $\cite{ABM}$ for more information). Other polynomials for which $D_f(n)$ has been studied include powers of $x$ and Dickson polynomials of a degree relatively prime to 6 (see $\cite{Z}$). Most of the values of $D_f(n)$ are quite complicated. However, there are some cases where $D_f(n)$ has relatively simple values. For example, $D_f(n) $ is the smallest integer $m \ge 2n$ such that $m = p$ or $m = 2p$ for some odd prime $p$ if $f(x) = x^2$ ($n > 4$) $\cite{ABM}$. In addition, in a recent paper by Z.-W. Sun $\cite{S}$, it was found that for some quadratic polynomials $f$, $D_f(n)$ is a prime that has a simple description. For example, $D_f(n)$ is the least prime greater than $2n - 2$ if $f(x) = 2x(x - 1)$. Note that this can theoretically be used to generate all primes, but not feasibly. \\

Most of the discriminators of the polynomials considered in $\cite{S}$ either take prime values or are of the form $d^{\lceil \log_d n \rceil}$, where $d \in \mathbb{Z^{+}}$. For example, Sun proves that if $f(x) = x(dx - 1)$ for $d = 2$ or $d = 3$, then $D_f(n) = d^{\lceil \log_d n \rceil}$. We generalize the $d = 2$ case in Section 2 and prove that the discriminator is equal to $2^{\lceil \log_2 n \rceil}$ if $d = 2^r$ for any $r \in \mathbb{Z^{+}}$. We also make some general statements about the case where $d = p^r$ in Section 3, where $p$ is a prime and $r \in \mathbb{Z^{+}}$. In this case, we provide a potential method for finding a function which only takes prime values. Finally, we suggest potential directions for future study in Section 4 and consider discriminators of arbitrary polynomials with integer coefficients after they are multiplied by a constant. This allows us to obtain estimates for the sizes of the prime values which the discriminators considered in $\cite{S}$ take. \\

\section{A result on discriminators of quadratic polynomials}

In this section, we shall prove that $D_f(n) = d^{\lceil \log_2 n \rceil}$ if $f(x) = x(2^r x - 1)$, where $r \in \mathbb{Z}^{+}$.  We first recall a result of Z.-W. Sun \cite{S}.

\begin{thm}[Sun]
Let $d \in \{2, 3\}$ and $n \in \mathbb{Z^{+}}$. If $f(x) = x(dx - 1)$, then $D_f(n) = d^{\lceil \log_d n \rceil}$.
\end{thm}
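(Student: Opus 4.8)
The plan is to reduce everything to the factorization
$$f(i) - f(j) = (i - j)\bigl(d(i+j) - 1\bigr),$$
which holds because $f(x) = dx^2 - x$, together with the elementary observation that $d(i+j) - 1 \equiv -1 \pmod d$, so this second factor is coprime to $d$ and hence to every power of $d$. Throughout I would write $k = \lceil \log_d n \rceil$, so that $d^{k-1} < n \le d^k$.

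For the upper bound $D_f(n) \le d^k$, I would take $m = d^k$. Since $d \in \{2,3\}$ is prime and $d(i+j) - 1$ is a unit modulo $d^k$, the congruence $f(i) \equiv f(j) \pmod{d^k}$ is equivalent to $i \equiv j \pmod{d^k}$. For distinct $i, j \in \{1, \ldots, n\}$ one has $0 < |i - j| \le n - 1 < d^k$, so no such congruence can hold, and $f(1), \ldots, f(n)$ are distinct modulo $d^k$.

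For the lower bound I must show that every $m < d^k$ fails. If $m < n$, the pigeonhole principle produces $i \ne j$ in $\{1, \ldots, n\}$ with $m \mid i - j$, and then $m \mid f(i) - f(j)$ directly from the factorization. The remaining range is $n \le m < d^k$; the key point here is that the interval $(d^{k-1}, d^k)$ contains no power of $d$, so such an $m$ is not a power of $d$ and factors as $m = d^a u$ with $u > 1$ and $\gcd(u, d) = 1$. I would then force a collision by solving, via the Chinese Remainder Theorem, the two conditions $i \equiv j \pmod{d^a}$ and $i + j \equiv d^{-1} \pmod u$: any solution gives $d^a \mid i - j$ and $u \mid d(i+j) - 1$, whence $m = d^a u \mid f(i) - f(j)$.

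The main obstacle is to carry out this last construction with both indices landing in $\{1, \ldots, n\}$. I expect to parametrize by the values $i - j$ and $i + j$, take $i - j$ as small as the congruence allows, and then choose the smallest admissible $i + j$ of the correct parity; bounding the resulting $i$ by $n$ will rely on $m < d^k < d \cdot n$, which follows from $n > d^{k-1}$. The parity bookkeeping differs for $d = 2$ (where $u$ is forced to be odd) and $d = 3$, and a few boundary cases—most notably when $u \mid d + 1$, which forces $d^{-1} \equiv -1 \pmod u$ and hence a small $u \in \{2, 3, 4\}$—will have to be checked by direct computation. Verifying that these estimates always place a collision within the first $n$ values is where the real work lies.
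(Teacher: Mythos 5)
The paper never proves this theorem --- it is quoted from Sun \cite{S} --- so the only in-paper material to compare against is the machinery of Lemma 1 and Theorem 2, and your plan is essentially that machinery. Your upper bound is exactly Lemma 1: since $d$ is prime and $d(i+j)-1$ is a unit modulo $d^k$, a collision modulo $d^k$ forces $d^k \mid i - j$, impossible for $i,j \in \{1,\dots,n\}$. Your lower bound skeleton (pigeonhole for $m < n$; for $n \le m < d^k$ observe $m$ is not a power of $d$, write $m = d^a u$ with $u > 1$ coprime to $d$, and manufacture a collision from $d^a \mid i-j$ together with $i+j \equiv d^{-1} \pmod u$) is precisely the Case 1/2/3 structure of the paper's proof of Theorem 2, which already covers $d = 2$; you are extending the same CRT construction to $d = 3$.

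The one substantive caveat is the step you yourself flag as ``where the real work lies.'' You say the bound $i \le n$ ``will rely on $m < d^k < dn$,'' but for $d = 3$ that inequality alone is not enough: taking $i - j = 3^a$ and the least admissible odd $i + j$ gives roughly $i \le 3^a + u$ (or $3^a + u/2$ when $u$ is even), and if you only control the product $3^a u = m < 3n$ this sum could a priori be as large as about $m/2 \approx 3n/2$. What saves the argument is that \emph{both} factors are constrained: $a \le k-1$ forces $3^a \le 3^{k-1} \le n - 1$, and $u < 3^{k-a}$, so by convexity of $x + P/x$ the sum $3^a + u$ is maximized at the endpoints $3^a = 3$ or $u \in \{2,5\}$, where an integrality/parity check gives $i \le 3^{k-1} + 1 \le n$. (This is the same style of estimate the paper carries out explicitly for $d = 2^r$, where it bounds $2l \le 2^{a+1} + 2q - 2 \le 2n$.) So the plan does close, but the decisive inequality is not the one you named, and the verification you defer is genuinely the bulk of the proof rather than routine bookkeeping.
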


We extend this theorem to the case $d = 2^r$ for any $r \in \mathbb{Z^{+}}$.

\begin{thm}
Let $d = 2^r$ with $r \in \mathbb{Z^{+}}$ and $f(x) = x(dx - 1)$. Then, $D_f(n) = 2^{\lceil \log_2 n \rceil}$. 
\end{thm}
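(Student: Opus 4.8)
The plan is to write $N := 2^{\lceil \log_2 n\rceil}$ for the least power of $2$ that is at least $n$, and to establish two things: that $f(1),\dots,f(n)$ are pairwise distinct modulo $N$, and that they are \emph{not} distinct modulo any $m < N$. Everything rests on the factorization
\[
f(i) - f(j) = (i-j)\bigl(2^r(i+j)-1\bigr),
\]
whose second factor is odd for all $i,j$. Writing a modulus as $m = 2^a b$ with $b$ odd, the congruence $f(i)\equiv f(j)\pmod m$ thus splits as $2^a \mid i-j$ together with $b \mid (i-j)\bigl(2^r(i+j)-1\bigr)$; and since $\gcd(2^r,b)=1$, the second condition holds as soon as $i+j\equiv (2^r)^{-1}\pmod b$.

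First I would check distinctness at $m = N$. As $N$ is a power of $2$ and $2^r(i+j)-1$ is odd, $N \mid (i-j)\bigl(2^r(i+j)-1\bigr)$ forces $N \mid i-j$; but $1\le i<j\le n$ gives $0 < j-i \le n-1 < N$, so no collision occurs and $D_f(n)\le N$.

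The real work is the lower bound: every $m = 2^a b < N$ ($b$ odd) must exhibit a collision. Since $2^a \le m < N = 2^k$ with $k=\lceil\log_2 n\rceil$, automatically $a \le k-1$ and $b < 2^{k-a}$. The idea is to search inside a single residue class modulo $2^a$, so that the condition $2^a \mid i-j$ is free. Picking a class of maximal size, its elements form an arithmetic progression $c, c+2^a,\dots, c+M\,2^a$ with $M+1 = \lceil n/2^a\rceil$, and the sums of its distinct pairs are exactly $2c + 2^a k$ for $1 \le k \le 2M-1$. Because $2^a$ is invertible modulo $b$, these sums run through every residue class modulo $b$ provided the range of $k$ has at least $b$ terms, i.e. $2M-1 \ge b$; then some pair satisfies $i+j\equiv (2^r)^{-1}\pmod b$, and combined with $2^a \mid i-j$ this yields $m \mid f(i)-f(j)$.

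The step I expect to be the main obstacle is verifying $2M-1 \ge b$ uniformly in $m$: a large power $2^a$ makes the chosen class short, while a large $b$ demands many pairwise sums, and these pressures must be balanced against one another. This is exactly where the hypothesis $n > 2^{k-1}$ is used. It gives $\lceil n/2^a\rceil \ge 2^{\,k-1-a}+1$, hence $2M-1 \ge 2^{\,k-a}-1 \ge b$, the final inequality coming from $b < 2^{k-a}$. With this in hand the construction succeeds for every $m < N$ simultaneously, including the pure power-of-two case $b=1$, and the theorem follows. I would dispatch the trivial case $n=1$ (where $N=1$ and there is nothing to check) separately.
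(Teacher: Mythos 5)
Your proof is correct, and while it rests on the same factorization and the same reduction as the paper's, the existence argument for the lower bound is genuinely different. The upper bound is identical to the paper's Lemma 1 with $p=2$: the factor $2^r(i+j)-1$ is odd, so $N\mid f(i)-f(j)$ forces $N\mid i-j$, impossible for $0<j-i\le n-1<N$. For the lower bound, both you and the paper must produce $k<l\le n$ with $2^a\mid l-k$ and $l+k\equiv(2^r)^{-1}\pmod b$, but the paper does this by splitting into three cases ($m$ a power of two, $m$ odd, $m=2^aq$ mixed) and, in the mixed case, explicitly solving the system $l+k=\bar d+vq$, $l-k=w\cdot 2^a$ with $w=1$ and the least admissible $v$, including some delicate parity bookkeeping on $\bar d+vq$ to guarantee $l\le n$. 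You instead restrict to a largest residue class modulo $2^a$ inside $\{1,\dots,n\}$, making the $2$-part of the divisibility automatic, and note that the pairwise sums of that class run through $2M-1$ consecutive terms of an arithmetic progression with common difference $2^a$ coprime to $b$, hence hit every residue modulo $b$ once $2M-1\ge b$; your inequality $2M-1\ge 2^{k-a}-1\ge b$ does follow from $n>2^{k-1}$ and $2^ab<2^k$ (with $b$ an integer strictly below $2^{k-a}$), and the degenerate cases $b=1$ and $a=0$ are absorbed into the same computation. What your route buys is uniformity and brevity: one pigeonhole-style covering argument replaces the paper's three cases and its parity analysis. What the paper's route buys is an explicit colliding pair $(k,l)$ rather than a pure existence statement.
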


Before we prove this theorem, we will give an upper bound for $D_f(n)$ in the more general case where $d = p^r$ for some prime $p$.

\begin{lemma}
Let $d = p^r$ ($p$ prime) with $r \in \mathbb{Z^{+}}$ and $f(x) = x(dx - 1)$. Then $D_f(n) \le p^{\lceil \log_p n \rceil}$. 
\end{lemma}

\begin{proof}
It is sufficient to show that $f(1), \ldots, f(n)$ are distinct modulo $p^{\lceil \log_p n \rceil}$. \\

Suppose there exist $k$ and $l$ with $1 \le k < l \le n$ such that $f(k) \equiv f(l)$ (mod $p^{\lceil \log_p n \rceil}$). Note that $f(l) - f(k) = (l - k)(d(l + k) - 1)$. This means that $(l - k)(d(l + k) - 1) \equiv 0$ (mod $p^{\lceil \log_p n \rceil}$). Since $d(l + k) - 1$ is not divisible by $p$ and $p | d$, we have $p^{\lceil \log_p n \rceil} | l - k \Rightarrow p^{\lceil \log_p n \rceil} \le l - k < n$. Then, $\log_p n > \lceil \log_p n \rceil$, which is a contradiction. Therefore, $l$ and $k$ cannot both be in $\{1, \ldots, n \}$ as desired.  
\end{proof}

Here is a consequence of this lemma.

\begin{cor}
Let $f(x) = x(dx - 1)$ with $d = p^r$ for some prime $p$. Then $D_f(n) = n$ when $n$ is a power of $p$.
\end{cor}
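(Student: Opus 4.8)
The plan is to combine the upper bound from the Lemma with a universal lower bound that holds for every polynomial and every modulus. The key observation is that when $n$ is a power of $p$, the ceiling in the exponent $p^{\lceil \log_p n \rceil}$ disappears and the bound becomes exactly $n$, pinning $D_f(n)$ from above; I then only need to match it from below.

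First I would establish the lower bound $D_f(n) \ge n$, which in fact requires no hypothesis on $f$ at all. For any modulus $m$, there are exactly $m$ residue classes, so if $m < n$ then among the $n$ values $f(1), \ldots, f(n)$ two must coincide modulo $m$ by the pigeonhole principle. Hence no $m < n$ can witness the discriminator, and $D_f(n) \ge n$ for every $n$.

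Next I would evaluate the upper bound at $n = p^k$. Writing $n = p^k$ gives $\log_p n = k \in \mathbb{Z}$, so $\lceil \log_p n \rceil = k$ and therefore $p^{\lceil \log_p n \rceil} = p^k = n$. Applying the Lemma, which guarantees $D_f(n) \le p^{\lceil \log_p n \rceil}$, I obtain $D_f(n) \le n$. Combining this with the lower bound of the previous paragraph yields $D_f(n) = n$, as claimed.

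I expect no genuine obstacle here, since the statement is a direct corollary: the whole content is the observation that the Lemma's bound is attained exactly when $n$ is a power of $p$, together with the trivial pigeonhole lower bound. The only point requiring the slightest care is confirming that $\lceil \log_p n \rceil = k$ exactly (rather than $k+1$) when $n = p^k$, which is immediate because $\log_p(p^k)$ is the integer $k$.
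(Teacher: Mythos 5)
Your proposal is correct and matches the paper's proof exactly: the paper likewise combines the universal pigeonhole lower bound $D_f(n) \ge n$ with Lemma 1 evaluated at $n = p^k$, where $\lceil \log_p n \rceil = k$ makes the upper bound equal to $n$. Your write-up merely spells out the pigeonhole step that the paper takes for granted.
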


\begin{proof}
First, $D_f(n) \ge n$ for any function $f$. Let $n = p^k$. From Lemma 1, we have $D_f(n) \le p^{\lceil \log_p n \rceil} = p^k = n \Rightarrow D_f(n) = n$.
\end{proof}

Now we turn to the proof of Theorem 2.

\begin{proof} 
By Lemma 1 we have $D_f(n) \le 2^{\lceil \log_2 n \rceil}$ ($p = 2$ case). \\

We will show that if $m < 2^{\lceil \log_2 n \rceil}$, then there exist $k, l \in \{1, \ldots n \}$ distinct such that $f(l) \equiv f(k)$ (mod $m$). Note that $f(l) - f(k) = (l - k)(d(l + k) - 1)$. \\

$\bf{Case \text{ } 1:}$ $m = 2^t, t \le \lceil \log_2 n \rceil - 1$. \\

We can take $k = 1$ and $l = 2^t + 1 \le n$ and find that $m = 2^t | (l - k)(d(l + k) - 1)$ since $l - k = 2^t$. \\ 

$\bf{Case \text{ } 2:}$ $m$ is odd. \\

To verify that it is possible to find $k, l$ distinct such that $(l - k)(d(l + k) - 1) \equiv 0 \text{ (mod $m$)}$ in this case, observe that

\begin{align*}
d(l + k) - 1 \equiv 0 \text{ (mod $m$)} \\
\Leftrightarrow d(l + k) \equiv 1 \text{ (mod $m$)} \\
\Leftrightarrow l + k \equiv \bar{d} \text{ (mod $m$),}
\end{align*}

\noindent
where $\bar{d}$ is the least positive remainder of the inverse of $d$ mod $m$ (which exists because gcd($d$, $m$) = 1). Note that $m < 2^{\lceil \log_2 n \rceil} < 2n$. Since $\bar{d} < m < 2n$, there are $k, l$ such that $l + k = \bar{d}$ if $\bar{d} \ge 3$. If $\bar{d} = 1$, we can find $k, l$ such that $k + l = m + \bar{d}$ since $m < 2^{\lceil \log_2 n \rceil} < 2n \Rightarrow 2n - m \ge 2 \Rightarrow m + 1 < 2n$. The same can be done for $\bar{d} = 2$ if $2n - m > 2 \Rightarrow m + 2 < 2n$. We have $2n - m = 2$ if and only if $m + 1 = 2^{\lceil \log_2 n \rceil}$ and $2^{\lceil \log_2 n \rceil} + 1 = 2n$. However, the second statement is impossible since $2^{\lceil \log_2 n \rceil} + 1$ is odd (for $n > 1$) and $2n$ is even. \\

$\bf{Case \text{ } 3:}$ $m = 2^a q$ ($a \ge 1$, $q \ge 3$ odd). \\ 

We have $(l - k)(d(l + k) - 1) \equiv 0 \text{ (mod $2^a q$)}$ if and only if $(l - k)(d(l + k) - 1) \equiv 0 \text{ (mod $2^a$)}$ and $(l - k)(d(l + k) - 1) \equiv 0 \text{ (mod $q$)}$. We claim that there exist $k, l$ such that $l + k \equiv \bar{d} \text{ (mod $q$)}$ and $l - k \equiv 0 \text{ (mod $2^a$)}$ such that $2^a | l - k$ and $q | d(l + k) - 1$ $\Rightarrow  m = 2^a q | (l - k)(d(l + k) - 1)$. Here, $\bar{d}$ is the least positive value of the inverse of $d$ mod $q$. This reduces to 

\begin{align*}
l + k = \bar{d} + vq \\
l - k = w \cdot 2^a.
\end{align*}

Solving for $l$ and $k$, we obtain $l = \frac{1}{2}(\bar{d} + vq) + w \cdot 2^{a - 1}$ and $k = \frac{1}{2}(\bar{d} + vq) - w \cdot 2^{a - 1}$. Since $k < l$ and $w > 0$, we have $w \ge 1$. Also, $\bar{d}$ and $v$ have the same parity since $k, l \in \mathbb{Z}$ if and only if $\bar{d} + vq$ is even (given $w \in \mathbb{Z}$) and $q$ is odd. Say that we choose $w = 1$ and $v$ to be the smallest integer such that $qv + \bar{d} > 2^a$. Then, we have that $qv + \bar{d} \le 2^a + q - 1$. However, $qv + \bar{d}$ may not be even in this case and we may need to add another copy of $q$ to change parity. This implies that $qv + \bar{d} \le 2^a + 2q - 1$ after we add the condition that $qv + \bar{d}$ is even. Since $2^a + 2q - 1$ is odd, we have an upper bound of $2^a + 2q - 2$. If $v$ is negative, then $k$ or $l$ may be negative. In this case, we choose $v = 0$ and get $qv + \bar{d} = \bar{d}$. Taking $w = 1$ and $v$ to be the smallest $\emph{nonnegative}$ integer such that $qv + \bar{d} > 2^a$ and $qv + \bar{d}$ is even, we have $qv + \bar{d} \le \max(2^a + 2q - 2, \bar{d}) \le \max(2^a + 2q - 2, q - 1) = 2^a + 2q - 2$. This means that $2l = qv + \bar{d} + 2^a \le 2^a + 2^a + 2q - 2 = 2^{a + 1} + 2q - 2$. Since $m = 2^a q < 2^{\lceil \log_2 n \rceil}$, we have $2^{a + 1} + 2q - 2 \le 2n$, which implies that $2l \le 2^{a + 1} + 2q - 2 \le 2n \Rightarrow l \le n$. \\

This means that integer solutions exist for $v$ and $w$ and suitable values of $k$ and $l$ exist such that $(l - k)(d(l + k) - 1) \equiv 0 \text{ (mod $m$)}$ when $m = 2^a q$ with $a \ge 1$ and $q$ odd ($q \ge 3$). Therefore, $D_f(n) \ge 2^{\lceil \log_2 n \rceil} \Rightarrow D_f(n) = 2^{\lceil \log_2 n \rceil}$.

\end{proof}

\section{Properties of the discriminator in the $d = p^r$ case and a potential method to generate primes}

In the previous section, we used a result about the general $d = p^r$ case in order to prove Theorem 2. Lemma 1 stated that $D_f(n) \le p^{\lceil \log_p n \rceil}$ and it followed from this lemma that $D_f(n) = n$ when $n$ is a power of $p$. We can make some more specific observations about this case after finding the value of $D_f(n)$ for various values of $d$ and $n$ using a computer program. First, $D_f(n)$ behaves similarly to $p^{\lceil \log_p n \rceil}$ when $d$ is a power of a small prime $p$. Moreover, there is still a significant clustering around powers of $p$ even for relatively large $p$. In addition, the $D_f(n)$ where $f(x) = x(dx - 1)$ for $d = p, p^2, p^3, \ldots$ ($p$ prime) appear to behave very similarly to each other. Generally, the value of $D_f(n)$ seems to deviate more from $p^{\lceil \log_p n \rceil}$ when $p$ is large or a large power of $p$ is used. \\





We used a computer program to determine $D_f(n)$ for different values of $d = p^r$ and $n$ by looping through a bound on $D_f(n)$ in terms of $n$ and checking whether $f(1), \ldots, f(n)$ were distinct modulo $m$ for each $m$ in this interval. We will now give the interval used and describe how it was obtained. Given $d$, we want to show that $n \le D_f(n) < dn$ for $f(x) = x(dx - 1)$. If $m < n$, there exist $f(k)$ and $f(l)$ with $1 \le k < l \le n$ such that $f(k) \equiv f(l) \text{ (mod $m$)}$ by the pigeonhole principle. So, $D_f(n) \ge n$. The upper bound follows from Lemma 1 since $D_f(n) \le p^{\lceil \log_p n \rceil} < pn \le dn$. Some of the values tested are recorded in the tables below. \\ 


\begin{table}[ht]
\caption{Discriminator values for $f(x) = x(3^3 x - 1)$, $n = 1, \ldots, 300$. Note that 223, 541, 659, and 709 are prime.} 
\centering
\begin{tabular}{| c | c | c | c |}
\hline
$n$ & $D_f(n)$ & $n$ & $D_f(n)$ \\
\hline
1 & 1 & 82 - 97 & 223 \\
2 - 3 & 3 &  98 - 243 & 243\\
4 - 9 & 9 & 244 - 260 & 541 \\
10 - 27 & 27 & 261 - 270 & 659 \\
28 - 81 & 81 & 271 - 300 & 709\\
\hline
\end{tabular} 
\end{table}

\begin{table}[ht]
\caption{Discriminator values for $f(x) = x(7^2 x - 1)$, $n = 1, \ldots, 300$. Note that 37, 41, 131, 157, 197, 229, and 331 are prime.} 
\centering
\begin{tabular}{| c | c | c | c |}
\hline
$n$ & $D_f(n)$ & $n$ & $D_f(n)$\\
\hline
1 & 1 & 19 - 49 & 49\\
2 & 3 & 50 - 61 & 131 \\
3 - 7 & 7 & 62 - 70 & 157 \\
8 & 16 & 71 - 96 & 197 \\
9 & 21 & 97 - 107 & 229 \\
10 - 17 & 37 & 108 - 152 & 331 \\
18 & 41 & 153 - 300 & 343\\
\hline
\end{tabular} 
\end{table}

\pagebreak

\begin{table}[ht]
\caption{Discriminator values for $f(x) = x(29x - 1)$, $n = 1, \ldots, 500$. Note that all values except 1, 15, and 841 are primes.} 
\centering
\begin{tabular}{| c | c | c | c | c | c |}
\hline
$n$ & $D_f(n)$ & $n$ & $D_f(n)$ & $n$ & $D_f(n)$ \\
\hline
1 & 1 & 48 - 61 & 131 & 197 & 457 \\
2 & 3 & 62 & 151 & 198 - 223 & 479 \\
3 - 4 & 7 & 63 - 72 & 167 & 224 - 225 & 503 \\
5 & 15 & 73 - 75 & 199 & 226 - 252 & 523 \\
6 - 10 & 19 & 76 - 112 & 233 & 253 - 277 & 619 \\
11 - 29 & 29 & 113 - 121 & 271 & 278 - 304 & 653 \\
30 - 34 & 73 & 122 & 283 & 305 - 358 & 769 \\
35 - 43 & 97 & 123 - 168 & 349 & 359 - 385 & 827 \\
44 - 47 & 109 & 169 - 196 & 421 & 386 - 500 & 841 \\
\hline
\end{tabular} 
\end{table}

Based on the values tested, we raise the following conjecture about the behavior of $D_f(n)$.

\begin{conj}
For $f(x) = x(dx - 1)$ and $d = p^r$, $D_f(n)$ is either a prime number or $p^{\lceil \log_p n \rceil}$ for sufficiently large $n$.
\end{conj}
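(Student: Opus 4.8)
The plan is to reformulate the conjecture as a statement about the \emph{first} modulus that discriminates. By the pigeonhole principle $D_f(n)\ge n$, and by Lemma 1 $D_f(n)\le p^{\lceil\log_p n\rceil}$, so $D_f(n)$ lies in $[\,n,\ p^{\lceil\log_p n\rceil}\,]$; moreover the endpoint $p^{\lceil\log_p n\rceil}$ always discriminates, again by Lemma 1. Since $D_f(n)$ is the smallest discriminating modulus, the conjecture is equivalent to the assertion that no composite $m$ with $n\le m<p^{\lceil\log_p n\rceil}$ is the least discriminating modulus. I would work from the characterization coming from $f(l)-f(k)=(l-k)(d(l+k)-1)$ and the fact (already exploited in Lemma 1) that $d(l+k)-1\equiv-1\pmod p$ is a unit modulo $p$: writing $m=p^a s$ with $\gcd(s,p)=1$, a collision $f(k)\equiv f(l)\pmod m$ exists if and only if there are $1\le k<l\le n$ with $p^a\mid l-k$ and $s\mid(l-k)\bigl(d(l+k)-1\bigr)$.

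Several families of composites can then be eliminated exactly as in the proof of Theorem 2. Pure powers $m=p^a$ with $2\le a\le\lceil\log_p n\rceil-1$ collide via $k=1,\ l=p^a+1\le n$ (the analogue of Case 1). For $m=p^a s$ with $s>1$ I would set $D:=l-k=p^a$ and seek $L:=l+k$ satisfying $L\equiv\bar d\pmod s$ (with $\bar d\equiv d^{-1}\pmod s$) and $L\equiv p^a\pmod 2$, subject to $p^a+2\le L\le 2n-p^a$ so that $k=(L-p^a)/2$ and $l=(L+p^a)/2$ are integers in $\{1,\dots,n\}$; this is the analogue of Case 3, and it succeeds whenever the admissible window for $L$, of length about $2n-2p^a$, is long enough to contain the relevant residue class, whose modulus is at most $2s$. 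When $p^a$ and $s$ are comparable, both of size roughly $\sqrt{pn}$, this window condition follows comfortably from $m=p^a s<p^{\lceil\log_p n\rceil}<pn$.

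The difficulty, and the reason the naive strategy of eliminating every composite must be abandoned, appears in the opposite regime. Fix $p\ge 5$, take $n=p^{\,j}+1$ (so that $p^{\,j}=p^{\lceil\log_p n\rceil-1}$), and choose an odd prime $q<p$ with $q\nmid d(n+1)-1$; then $m=(n-1)q=p^{\,j}q$ is composite and satisfies $m<p^{\lceil\log_p n\rceil}$, yet it \emph{does} discriminate. Indeed $p^{\,j}\mid l-k$ together with $0<l-k\le n-1=p^{\,j}$ forces $l-k=p^{\,j}$, hence $\{k,l\}=\{1,n\}$ and $l+k=n+1$, after which $q\mid(l-k)\bigl(d(l+k)-1\bigr)$ fails because $q\nmid d(n+1)-1$. (Notably no such $q$ exists when $p=2$, which is consistent with the clean conclusion of Theorem 2.) Hence one cannot rule out discriminating composites outright; instead the conjecture demands that every discriminating composite below $p^{\lceil\log_p n\rceil}$ be preceded by a strictly smaller discriminating modulus, and ultimately by a discriminating \emph{prime}.

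This is where I expect the main obstacle to lie. A prime $\tilde p$ with $n\le\tilde p<p^{\lceil\log_p n\rceil}$ discriminates precisely when the residue $d^{-1}\bmod\tilde p$ avoids the set $\{3,4,\dots,2n-1\}$ reduced modulo $\tilde p$ -- a condition on the ``random-looking'' quantity $d^{-1}\bmod\tilde p$. Proving the conjecture thus reduces to an interplay between the distribution of primes in $[n,\,p^{\lceil\log_p n\rceil})$ and the equidistribution of the residues $d^{-1}\bmod\tilde p$: one must guarantee that for all large $n$ some discriminating prime lies below the least discriminating composite. I would attempt this through sieve or equidistribution estimates controlling $d^{-1}\bmod\tilde p$ as $\tilde p$ ranges over the primes of the interval, but it is exactly this uniform analytic input -- rather than the elementary congruence bookkeeping of the earlier cases -- that I expect to be the hard part, and that presumably accounts for the statement being posed only as a conjecture.
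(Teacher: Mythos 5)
The statement you are addressing is posed in the paper as a \emph{conjecture}: the paper gives no proof, only numerical evidence (Tables 1--3) and an analogy with Sun's Theorem 3, so there is no argument of the author's to compare yours against. Your proposal, as you acknowledge, is not a proof either; what can be assessed is whether your reduction and your diagnosis of the difficulty are sound, and they are. The characterization of a collision modulo $m=p^a s$ (namely $p^a\mid l-k$ and $s\mid(l-k)(d(l+k)-1)$, using that $d(l+k)-1$ is a unit mod $p$) is correct, the elimination of pure powers $p^a$ and of balanced composites $p^a s$ follows the same bookkeeping as Cases 1 and 3 of the proof of Theorem 2, and your example $m=p^{\,j}q$ with $n=p^{\,j}+1$ is a genuinely valuable observation: it shows that discriminating composites below $p^{\lceil\log_p n\rceil}$ do exist for $p\ge 5$, so the conjecture cannot be proved by showing that every smaller composite fails to discriminate, and it explains both why the conjectured answer must allow primes and why the $p\in\{2,3\}$ cases are cleaner. (Minor caveat: an odd prime $q<p$ with $q\nmid d(n+1)-1$ need not exist for every such $n$ when $p=5$, but a single choice of $n$ suffices for the illustrative point.)

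The genuine gap is the one you name yourself: to conclude that $D_f(n)$ is prime whenever it is not $p^{\lceil\log_p n\rceil}$, one must show that below the least discriminating composite there is always a discriminating prime, i.e., a prime $\tilde p\in[\,n,\ p^{\lceil\log_p n\rceil})$ with $d^{-1}\bmod\tilde p$ lying outside the set $\{3,\dots,2n-1\}$ reduced modulo $\tilde p$, and moreover that the least such prime beats every discriminating composite. Nothing in the paper, and nothing in your sketch, supplies the required control on the joint distribution of primes in that interval and of the residues $d^{-1}\bmod\tilde p$; this is precisely why the statement remains a conjecture. Your write-up is a correct and useful structural analysis of the problem, but it is not a proof and should not be presented as one.
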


Note that the conditions on the discriminator given in this conjecture are similar to conditions on $m$ given in $\cite{S}$ in order to have $f(1), \ldots, f(n)$ distinct modulo $m$ if $f(x) = x(x - 1)$. 

\begin{thm} [Sun]
Let $f(x) = x(x - 1)$. If $m$ and $n$ are integers such that $f(1), \ldots, f(n)$ are distinct modulo $m$, then $m$ is a prime or a power of two if $n \ge 15$ and $m \le 2.4 n$.
\end{thm}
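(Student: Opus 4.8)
The plan is to prove the contrapositive: assuming $n \ge 15$ and $m \le 2.4n$, I will show that if $m$ is neither prime nor a power of two (equivalently, $m$ is composite and has an odd divisor $\ge 3$), then $f(1), \ldots, f(n)$ are \emph{not} distinct modulo $m$. As in the proof of Theorem 2, the starting point is the factorization $f(l) - f(k) = (l-k)(l+k-1)$, so a collision $f(k) \equiv f(l) \pmod{m}$ with $1 \le k < l \le n$ is exactly a pair of integers $a = l-k$ and $b = l+k-1$ satisfying $m \mid ab$, $1 \le a < b$, and $a + b \le 2n - 1$. The last inequality is equivalent to $l \le n$, while $k \ge 1$ follows from $a < b$; and since $a + b = 2l - 1$ is odd, $a$ and $b$ automatically have opposite parity (which is exactly what makes $k, l$ integers). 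In particular, whichever of $a, b$ I take to be even must absorb all the factors of $2$ in the product $ab$.

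The construction step is to produce such a pair by splitting the factorization of $m$. Writing $m = 2^s u$ with $u \ge 3$ odd, the idea is to place the full power $2^s$ (or a single factor $2$ when $m$ is odd) into the even term, and to distribute the odd part $u$ between the two terms as evenly as possible by choosing an odd divisor of $u$ close to $\sqrt{m}$. This yields $ab = m$ when $m$ is even and $ab = 2m$ when $m$ is odd, the extra factor of $2$ in the odd case being forced by the parity constraint. I would then record that the resulting $a + b$ is small relative to $m$ in all but a few tiny cases: the ratio $(a+b)/m$ is largest at $m = 9$ (where $a + b = m = 9$, realized by $a = 3, b = 6$) and at $m = 6$ (where $a + b = 5$), and it drops well below $\tfrac{5}{6}$ once $m$ exceeds a small threshold.

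It then remains to verify $a + b \le 2n - 1$. For the larger values of $m$ I would invoke $m \le 2.4n$, which rearranges to $2n - 1 \ge \tfrac{5}{6}m - 1$; since for those $m$ the constructed pair satisfies $a + b$ comfortably below $\tfrac{5}{6}m - 1$, the collision indeed lies in $\{1, \ldots, n\}$. For the finitely many small composite non-powers-of-two (those below the threshold, roughly $m < 36$), I would instead use $n \ge 15$, so that $2n - 1 \ge 29$, and check directly that the minimal achievable $a + b$ never exceeds $29$. Whenever the natural split produces $a > b$ one simply swaps the labels, and when the even-term adjustment breaks $a < b$ or integrality one shifts by a copy of the odd part, exactly the bookkeeping that appears in Case 3 of Theorem 2.

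The main obstacle is the tight calibration between the constant $2.4$, the threshold $n \ge 15$, and the extremal factorization bound. The worst cases $m = 6$ and $m = 9$ are precisely where $(a+b)/m$ is largest, and they are rescued only by $n \ge 15$, whereas $m \le 2.4n$ is what barely controls every larger $m$. The delicate part is therefore obtaining a clean, uniform estimate for $\min(a+b)$ over all admissible factorizations while respecting the parity constraint (which forces $ab \ge 2m$ for odd $m$) and the ordering requirement $a < b$; I expect a short finite check of the small $m$ to be needed to pin down that the stated constants are exactly sufficient.
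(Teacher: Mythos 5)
The paper offers no proof of this statement: it is quoted as a known result of Sun, attributed to \cite{S}, so there is no in-paper argument to compare yours against. Judged on its own, your strategy is sound and is the natural one given how this paper handles Theorem 2: the reduction of a collision to a pair $a = l-k$, $b = l+k-1$ of opposite parity with $m \mid ab$, $a < b$, and $a+b \le 2n-1$ is exactly right, and your construction (put the full $2$-part of $m$ into the even term, pay an extra factor of $2$ when $m$ is odd) does produce one. The quantitative bookkeeping also checks out. For even $m = 2^s u$ with $u \ge 3$ odd, the pair $\{2^s, u\}$ has sum $2^s + u \le \tfrac{m}{2} + 2$, which is at most $\tfrac{5}{6}m - 1$ once $m \ge 9$; for odd composite $m$ with least prime factor $p$, the pair $\{2p, m/p\}$ has sum $2p + m/p \le \max(6 + \tfrac{m}{3},\, 3\sqrt{m})$, which is at most $\tfrac{5}{6}m - 1$ for all odd composite $m \ge 21$. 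Since $m \le 2.4n$ is equivalent to $2n - 1 \ge \tfrac{5}{6}m - 1$, these cases are done, and the leftover values $m \in \{6, 9, 15\}$ (sums $5$, $9$, $11$) are absorbed by $n \ge 15$ --- indeed $m = 9$, where $a+b = m$, is the genuine extremal case you identified, and it alone forces a hypothesis like $n \ge 15$ rather than the $2.4n$ bound. What you have is a correct skeleton rather than a finished proof: the ``clean uniform estimate'' you defer is precisely the endpoint/convexity computation above plus the finite check, and nothing in it fails. I would only caution that your phrase ``an odd divisor close to $\sqrt{m}$'' is not always achievable (e.g.\ $m = 2q$ with $q$ a large prime), but your argument never actually needs it, since the cruder bound $\tfrac{m}{2}+2$ already beats $\tfrac{5}{6}m - 1$.
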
 

When $D_f(n)$ did not take values which were powers of $p$, almost all of the values taken were prime numbers. If a condition can be found for when these values occur, this may lead to additional methods to generate primes using functions whose discriminators which do not always take prime values. This may give relatively simple functions beyond the discriminators considered in $\cite{S}$ which take prime values. \\

\section{General statements about $D_f(n)$ for $f \in \mathbb{Z}[x]$ and future directions}

While many different patterns were observed above, there is still no general explanation for them and why the discriminator takes prime values in certain cases. In other words, it remains to be shown whether this has anything to do with the polynomials chosen or the discriminator itself. So, it may also be useful to try to determine how $D_f(n)$ changes when an operation is performed on $f$. This could be used to relate discriminators of different functions to each other in order to find some general structure for discriminators of polynomials such as expressing $D_{f \circ g}(n)$ or $D_{fg}(n)$ in terms of $D_f(n)$ and $D_g(n)$. This could enable us to deduce certain properties of the discriminators of some polynomials without directly computing them. \\

One such operation is multiplying $f$ by a constant. In this instance, it is sufficient to look at the case where the constant is prime since we can compose multiplication by other constants by multiplication of primes. If $D_f(n)$ is not divisible by $p$, then $D_{pf}(n)$ is the same as $D_f(n)$. If $D_f(n)$ is divisible by a  prime $p$, then $D_{pf}(n)$ can take quite a different form. For example, the discriminator of $\frac{x(x - 1)}{2}$ is always a power of 2, whereas the discriminator of $x(x - 1)$ can take arbitrarily large prime values in addition to powers of 2. Another example relates the discriminator of $4x(4x - 1)$ and the discriminator of $x(4x - 1)$ to each other. Whereas the discriminator of $4x(4x - 1)$ only takes prime values (see $\cite{S}$), the discriminator of $x(4x - 1)$ is always a power of 2 (see Theorem 2). \\

\begin{thm}
Let $p$ be a prime. Then, $D_f(n) \le D_{pf}(n) \le p D_f(n)$. 
\end{thm}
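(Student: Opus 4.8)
The plan is to prove the two inequalities separately, in each case reducing a statement about the values of $pf$ to a statement about the values of $f$ via the elementary equivalence that $pa \equiv pb \pmod{pm}$ holds if and only if $a \equiv b \pmod m$.

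For the lower bound $D_f(n) \le D_{pf}(n)$, I would first introduce the two solution sets $S_f = \{m : f(1), \ldots, f(n) \text{ are distinct mod } m\}$ and $S_{pf} = \{m : pf(1), \ldots, pf(n) \text{ are distinct mod } m\}$, so that by definition $D_f(n) = \min S_f$ and $D_{pf}(n) = \min S_{pf}$. The key step is to show $S_{pf} \subseteq S_f$: if $m \in S_{pf}$ but $f(i) \equiv f(j) \pmod m$ for some $i \ne j$, then multiplying by $p$ gives $pf(i) \equiv pf(j) \pmod m$, contradicting $m \in S_{pf}$. Since the minimum of a set cannot exceed the minimum of a subset, $\min S_f \le \min S_{pf}$, which is exactly the desired inequality.

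For the upper bound $D_{pf}(n) \le p\, D_f(n)$, I would write $M = D_f(n)$ and show directly that $pM \in S_{pf}$, which forces $D_{pf}(n) = \min S_{pf} \le pM$. Suppose $pf(i) \equiv pf(j) \pmod{pM}$; then $pM \mid p(f(i) - f(j))$, so $M \mid f(i) - f(j)$, i.e.\ $f(i) \equiv f(j) \pmod M$. Since $f(1), \ldots, f(n)$ are distinct modulo $M = D_f(n)$ by the definition of the discriminator, we conclude $i = j$, establishing that $pf(1), \ldots, pf(n)$ are pairwise distinct modulo $pM$.

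There is no serious obstacle here: both directions rest on the same divisibility equivalence together with the monotonicity of the minimum under set inclusion. The only point requiring a word of care is the degenerate case $D_f(n) = \infty$; the inclusion $S_{pf} \subseteq S_f$ shows that if $S_f$ is empty then so is $S_{pf}$, so both discriminators are infinite and the stated chain of inequalities holds under the usual conventions. For the quadratic polynomials considered in this paper the discriminators are finite, so this case does not actually arise.
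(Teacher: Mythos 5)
Your proof is correct and follows essentially the same route as the paper's: the lower bound comes from observing that any collision of $f$-values modulo $m$ yields a collision of $pf$-values modulo $m$ (so every modulus that works for $pf$ works for $f$), and the upper bound comes from the equivalence $pa \equiv pb \pmod{pM} \iff a \equiv b \pmod{M}$ applied with $M = D_f(n)$. Your write-up is merely more explicit than the paper's (the set-inclusion framing and the remark about the infinite case are welcome clarifications, not a different argument).
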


\begin{proof}
Take $k, l$ such that $1 \le k < l \le n$. If $f(l) - f(k) \equiv 0 \text{ (mod $m$)}$, then $p(f(l) - f(k)) \equiv pf(l) - pf(k) \equiv 0 \text{ (mod $m$)}$. So, $D_f(n) \le D_{pf}(n)$. Also, $pf(l) - pf(k) \equiv p(f(l) - f(k)) \equiv 0 \text{ (mod $p D_f(n)$)}$ if and only if $f(l) - f(k) \equiv 0 \text{ (mod $D_f(n)$)}$, which is impossible. This means that $D_{pf}(n) \le p D_f(n)$.
\end{proof}

In $\cite{S}$, the discriminator for $4x(4x - 1)$ was found to be the least prime $p > \frac{8n - 4}{3}$ with $p \equiv 1 \text{ (mod 4)}$. From Theorem 2, we have that the discriminator for $x(4x - 1)$ is $2^{\lceil \log_2 n \rceil}$. Using the bound in Theorem 4, we find that $\frac{8n - 4}{3} < p < 4 \cdot 2^{\lceil \log_2 n \rceil} < 4 \cdot 2n = 8n$. Similar steps can be taken for discriminators of $18x(3x - 1)$ and $x(3x - 1)$ to find that $3n < p < 54 n$, where $p$ is the least prime greater than $3n$ congruent to 1 mod 3 $\cite{S}$. So, we can obtain some estimates for the sizes of the prime values which some of the discriminators considered in $\cite{S}$ take. However, we do not have more precise inequalities or equalities relating the quantities to each other. We could try to find more specific patterns by considering the values of the discriminator obtained by multiplying the polynomials considered in $\cite{S}$ by various primes. Another possible place to start is with polynomials of the form $f(x) = x^j$, where $D_f(n)$ has a relatively simple structure. In particular, Bremser, Schumer, and Washington $\cite{BSW}$ proved the following result. \\

\begin{thm} $\cite{BSW}$
Let $f(x) = x^j$. If $j$ is odd, $D_f(n) = \min \{k : k \ge n, k \text{ squarefree}, \\ \text{ gcd}(\phi(k), j) = 1 \}$. If $j$ is even, $D_f(n) = \min \{k : k \ge 2n, k = q \text{ or } 2q, \text{ } q \text{ prime}, \\ \text{ gcd}(\phi(k), j) = 2 \}$.
\end{thm}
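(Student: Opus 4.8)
The plan is to recast the distinctness condition as a statement about the power map $\sigma_j\colon x\mapsto x^{j}$ on $\mathbb{Z}/m\mathbb{Z}$ and to determine exactly which $m$ make $\sigma_j$ injective on $\{1,\dots,n\}$. Since $D_f(n)\ge n$ by pigeonhole and $1,\dots,n$ are distinct residues as soon as $m\ge n$, everything hinges on the fibers of $\sigma_j$. First I would prove the structural lemma by the Chinese Remainder Theorem: for squarefree $m=p_1\cdots p_s$ we have $\mathbb{Z}/m\mathbb{Z}\cong\prod_i\mathbb{Z}/p_i\mathbb{Z}$, and on each factor $\sigma_j$ fixes $0$ and acts on the cyclic group $(\mathbb{Z}/p_i\mathbb{Z})^{\times}$ of order $p_i-1$. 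Hence $\sigma_j$ is a bijection modulo $p_i$ iff $\gcd(j,p_i-1)=1$, and modulo $m$ iff this holds for every $i$, which is precisely $\gcd(j,\phi(m))=1$. I would also record that $\sigma_j$ is never a bijection when $m$ is not squarefree: if $p^{2}\mid m$ then $(m/p)^{2}\equiv 0$, so every multiple of $m/p$ is sent to $0$ (for $j\ge 2$).

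For $j$ odd this gives the upper bound $D_f(n)\le k^{*}:=\min\{k\ge n:\ k\text{ squarefree},\ \gcd(\phi(k),j)=1\}$ at once, since $\sigma_j$ is a bijection modulo $k^{*}$ and therefore injective on $\{1,\dots,n\}$. For the lower bound I must show every $m$ with $n\le m<k^{*}$ fails; such an $m$ is either non-squarefree or has a prime factor $p$ with $g:=\gcd(j,p-1)>1$. In each case the fibers of $\sigma_j$ modulo the offending prime are nontrivial, and I would manufacture a collision $a^{j}\equiv b^{j}\pmod m$ by prescribing $a\equiv b\pmod{m/p}$ and choosing $a,b$ in distinct fibers modulo $p$ via CRT, so that $b-a$ is a controlled multiple of $m/p$.

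For $j$ even I would first exploit the symmetry $a^{j}=(-a)^{j}$: if $m\le 2n-1$ then some pair $a,\,m-a\in\{1,\dots,n\}$ collides, forcing $D_f(n)\ge 2n$, which is the source of the condition $k\ge 2n$. Once $m\ge 2n$, the elements $1,\dots,n$ all lie below $m/2$, so no two are negatives of one another, and injectivity fails exactly when some fiber of $\sigma_j$ contains two of them that are not a $\pm$-pair. On $(\mathbb{Z}/p\mathbb{Z})^{\times}$ the fiber through $x$ is the coset $x\cdot\ker\sigma_j$, of size $\gcd(j,p-1)$, and it equals $\{x,-x\}$ precisely when $\gcd(j,p-1)=2$; this explains the requirement $\gcd(\phi(k),j)=2$. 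The restriction to $k=q$ or $k=2q$ I would obtain by checking that any modulus with two distinct odd prime factors, or with an odd square factor, forces $4\mid\phi(k)$ or an extra fiber element and hence a non-$\pm$ collision among $\{1,\dots,n\}$; this is a finite but somewhat delicate case analysis.

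The main obstacle will be the lower bound, and specifically guaranteeing that the constructed collisions actually land inside $\{1,\dots,n\}$ rather than among larger residues: since $b-a$ is forced to be a multiple of $m/p$, I can place both $a$ and $b$ in $\{1,\dots,n\}$ only when $m/p$ is small relative to $n$. I would therefore prove as an enabling step that a usable modulus appears well below $2n$, so that every competitor $m<k^{*}$ is itself bounded by (essentially) $2n$ and the displacements $m/p$ stay below $n$; for this I would invoke Bertrand's postulate together with a Dirichlet-type count showing that for large $n$ there is a prime $q\in[n,2n)$ with $\gcd(q-1,j)$ equal to $1$ (odd $j$) or $2$ (even $j$). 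This quantitative input is the crux, and it also explains why the clean formula requires $n$ sufficiently large: for small $n$ degenerate fibers (for instance on a prime power) can otherwise let a non-structural modulus slip in below $k^{*}$.
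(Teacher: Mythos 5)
The paper offers no proof of this statement: it is quoted verbatim from \cite{BSW} as a known result, so there is no in-paper argument to measure you against and I can only judge your plan on its own terms. Your skeleton is the standard one and the easy half is fine: by CRT the map $x\mapsto x^j$ permutes $\mathbb{Z}/k\mathbb{Z}$ exactly when $k$ is squarefree and $\gcd(\phi(k),j)=1$ (this is Cordes' result, the paper's reference \cite{C}), which gives the upper bound in the odd case, and the $\pm$-symmetry correctly explains the thresholds $k\ge 2n$ and $\gcd(\phi(k),j)=2$ in the even case.

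The genuine gap is the lower bound, and you have located it but not closed it. To show that $D_f(n)$ equals the structural minimum $k^{*}$ you must produce a collision inside $\{1,\dots,n\}$ for \emph{every} non-structural $m$ with $n\le m<k^{*}$ (resp.\ $2n\le m<k^{*}$). Your coset/fiber construction only forces such a collision when $m$ is small relative to $n$: for a prime $m=q$ with $\gcd(j,q-1)=g>1$ the fibers are cosets of the order-$g$ subgroup, and the pigeonhole argument ``two subsets of $\mathbb{Z}/q\mathbb{Z}$ of size $>q/2$ intersect'' needs $q<2n$; for $2n\le m\lesssim n^{2}$ one needs a lattice (Minkowski) or exponential-sum argument to place both fiber elements in $[1,n]$; and for $m$ much larger than $n^{2}$ nothing forces a collision at all, so an ``accidental'' modulus could in principle beat $k^{*}$. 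Hence the whole theorem hinges on the quantitative fact that $k^{*}$ itself is small, essentially $k^{*}\le 2n$. You propose to obtain this from Bertrand's postulate together with a ``Dirichlet-type count'' of primes $q\in[n,2n)$ with $\gcd(q-1,j)$ equal to $1$ or $2$; but such a count in a dyadic interval is not elementary, its uniformity in $j$ is exactly the delicate point (for $j$ divisible by many small odd primes the admissible primes become very sparse), and, as you yourself concede, it would at best yield the statement for $n$ sufficiently large, whereas the theorem is asserted without that restriction. As written, the plan proves only a weakened version, and the crucial enabling estimate is precisely the step left unproved.
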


In the case where $j$ is odd, $D_f(n)$ is the same for any $j$ that have the same prime factors. Let $f(x) = x^r$ and $g(x) = x^s$, where $r, s \in \mathbb{Z^{+}}$ are odd. Then $f \circ g = g \circ f = x^{rs} \Rightarrow D_f(n) = D_g(n) = D_{f \circ g}(n) = D_{g \circ f}(n)$ if $r$ and $s$ have the same prime factors. \\

Some possible directions for future research include looking at function composition in more detail or examining other operations on functions. In the case of multiplying $f(x) = x^j$ by a constant, one approach is to modify the conditions on $m$ in order to have $f(1), \ldots, f(n)$ distinct mod $m$, which are given in $\cite{C}$.

\section{Acknowledgements}
This research was conducted at the University of Minnesota Duluth REU program, supported by NSF/DMS grant 1062709 and NSA grant H98230-11-1-0224. I would like to thank Joe Gallian for his encouragement and creating such a great environment for research at UMD. I would also like to thank Krishanu Sankar and Sam Elder for their help with my research. I would especially like to thank Ben Bond, Krishanu Sankar, David Moulton, and Tim Chow for very helpful discussions at various points of this project.

\end{document}